\def\NZQ{\mathbb}               % the font for N,Z,Q,R,C
\def\NN{{\NZQ N}}
\def\frk{\mathfrak}               % font for "Fraktur"
\def\Phi{{\frk N}}
\def\Pc{{\mathcal P}}
\def\I{{\mathcal I}}
\def\J{{\mathcal J}}
\def\opn#1#2{\def#1{\operatorname{#2}}} % to make operators
\opn\chara{char} \opn\length{\ell} \opn\pd{pd} \opn\rk{rk}
\opn\projdim{proj\,dim} \opn\injdim{inj\,dim} \opn\rank{rank}
\opn\depth{depth} \opn\grade{grade} \opn\height{height}
\opn\size{size}
\opn\embdim{emb\,dim} \opn\codim{codim}
\opn\Tr{Tr} \opn\bigrank{big\,rank}
\opn\superheight{superheight}\opn\lcm{lcm}
\opn\trdeg{tr\,deg}%\emph{
\opn\reg{reg} \opn\lreg{lreg} \opn\ini{in} \opn\lpd{lpd}
\opn\size{size}\opn{\mult}{mult}
\opn{\Cl}{Cl}
\opn\div{div} \opn\Div{Div} \opn\cl{cl} \opn\Cl{Cl}
\opn\Spec{Spec} \opn\Supp{Supp} \opn\supp{supp} \opn\Sing{Sing}
\opn\Ass{Ass} \opn\Min{Min} \opn\cl{cl}
\opn\Ann{Ann} \opn\Rad{Rad} \opn\Soc{Soc}
\opn\Syz{Syz} \opn\Im{Im} \opn\Ker{Ker} \opn\Coker{Coker}
\opn\Am{Am} \opn\Hom{Hom} \opn\Tor{Tor} \opn\Ext{Ext}
\opn\End{End} \opn\Aut{Aut} \opn\id{id} \opn\ini{in}
\opn\nat{nat}
\opn\pff{pf}%   \pf exists already
\opn\Pf{Pf} \opn\GL{GL} \opn\SL{SL} \opn\mod{mod} \opn\ord{ord}
\opn\Gin{Gin}
\opn\Hilb{Hilb}\opn\adeg{adeg}\opn\std{std}\opn\ip{infpt}
\opn\Pol{Pol}
\opn\sat{sat}
\opn\Var{Var}
\opn\Gen{Gen}
\opn\lex{lex}
\opn\div{div}
\opn\aff{aff} \opn\con{conv} \opn\relint{relint} \opn\st{st}
\opn\lk{lk} \opn\cn{cn} \opn\core{core} \opn\vol{vol}
\opn\link{link} \opn\star{star}
\opn\gr{gr}
\def\Ac{{\mathcal A}}
\def\Qc{{\mathcal Q}}
\def\pot#1#2{#1[\kern-0.28ex[#2]\kern-0.28ex]}
\opn\dirlim{\underrightarrow{\lim}}
\opn\inivlim{\underleftarrow{\lim}}
\def\Implies{\ifmmode\Longrightarrow \else
        \unskip${}\Longrightarrow{}$\ignorespaces\fi}
\def\implies{\ifmmode\Rightarrow \else
        \unskip${}\Rightarrow{}$\ignorespaces\fi}
\def\iff{\ifmmode\Longleftrightarrow \else
        \unskip${}\Longleftrightarrow{}$\ignorespaces\fi}
\def\NZQ{\mathbb}        % the font for N,Z,Q,R,C
\def\NN{{\NZQ N}}
\def\frk{\mathfrak}        % font for "Fraktur"
\def\Phi{{\frk N}}
\newtheorem{Theorem}{Theorem}[section]
\newtheorem{Lemma}[Theorem]{Lemma}
\newtheorem{Corollary}[Theorem]{Corollary}
\theoremstyle{definition}
\begin{document}
\title{nonsimple polyominoes and prime ideals}
\author {Takayuki Hibi and
Ayesha Asloob Qureshi}
\address{Takayuki Hibi, Department of Pure and Applied Mathematics,
Graduate School of Information Science and Technology,
Osaka University, Toyonaka, Osaka 560-0043, Japan}
\email{hibi@math.sci.osaka-u.ac.jp}

\address{Ayesha Asloob Qureshi, Department of Pure and Applied Mathematics, 
Graduate School of Information Science and Technology,
Osaka University, Toyonaka, Osaka 560-0043, Japan}
\email{ayesqi@gmail.com}

\thanks{The second author was supported by JSPS Postdoctoral Fellowship for Overseas Researchers FY2014.}

\subjclass{13G05, 13P10.}
\keywords{polyomino, polyomino ideal, prime ideal, finite graph}

\maketitle
\begin{abstract}
It is known that the polyomino ideal arising from a simple polyomino 
comes from a finite bipartite graph and, in particular, it is a prime ideal.
A class of nonsimple polyominoes $\Pc$ 
for which the polyomino ideal $I_{\Pc}$ is a prime ideal and for which 
$I_{\Pc}$ cannot come from a finite simple graph will be presented.   
\end{abstract}

\section*{Introduction}

The systematic study of the binomial ideals arising from 
polyominoes originated in the work \cite{Q} by the second author.
First, we briefly recall fundamental materials and basic terminologies 
on polyominoes and their binomial ideals.  
We refer the reader to \cite{Q} for further information
on algebra and combinatorics on polyominoes.

% \section{Review on polyomino ideals}
\medskip

{\bf (0.1)}
Let $\NN$ denote the set of nonnegative integers and 
\[
\NN^{2} = \{ (i, j) \, : \, i, j \in \NN \}.
\]
Given $a=(i,j)$ and $b=(k,\ell)$ belonging to $\NN^2$, 
% we write $a\leq b$ if $i\leq k$ and $j\leq \ell$.
we write $a < b$ if $i < k$ and $j < \ell$.

When $a < b$, we define an {\em interval} $[a,b]$ 
of $\NN^{2}$ to be
\[
[a,b]=\{c\in\NN^2 \, : \, a\leq c\leq b\} \subset \NN^{2}.
\]
% We call an interval $[a,b]$ {\em proper} if $a < b$.
For an interval $[a,b]$,  
the {\em diagonal} corners of $[a,b]$ are $a$ and $b$, and
the {\em anti-diagonal} corners of $[a,b]$ are
$c = (i,\ell)$ and $d = (k,j)$. 

\medskip

{\bf (0.2)}
A {\em cell} of $\NN^{2}$ with the lower left corner $a \in \NN^{2}$ 
is the interval $C = [a,a+(1,1)]$.  
Its {\em vertices} are $a, a+(1,0), a + (0,1)$ and $a + (1,1)$.  
Its {\em edges} are  
\[
\{a,a+(1,0)\}, \{a,a+(0,1)\},  \{a+(1,0),  a+(1,1)\},
\{a+(0,1),  a+(1,1)\}.
\]
Let $V(C)$ denote the set of vertices of $C$ and $E(C)$ 
the set of edges of $C$.
 
\medskip

{\bf (0.3)} 
Let $\Pc$ be a finite collection of cells of $\NN^2$.
Then its {\em vertex set} is $V(\Pc)=\bigcup_{C \in \Pc} V(C)$
and 
its {\em edge set} is $E(\Pc)=\bigcup_{C \in \Pc} E(C)$.
Let $C$ and $D$ be cells of $\Pc$. 
We say that $C$ and $D$ are {\em connected} 
if there exists a sequence of cells 
\[
{\mathcal C} \, : \, C = C_1, \ldots, C_m =D
\]
of $\Pc$  such that $C_i \cap C_{i+1}$ is an edge of $C_i$ for $i=1, \ldots, m-1$. 
Furthermore, if $C_i \neq C_j$ for all $i \neq j$, 
then $\mathcal{C}$ is called a {\em path} connecting $C$ with $D$. 

We say that $\Pc$
is a {\em polyomino} if any two cells of $\Pc$ are connected.
A polyomino $Q$ is a {\em subpolyomino} of $\Pc$ 
if each cell belonging to $\Qc$ belongs to $\Pc$. 

\medskip

{\bf (0.4)}
Let $A$ and $B$ be cells of $\NN^2$ for which $(i,j)$ is the lower left corner
of $A$ and $(k,\ell)$ is the lower left corner of $B$.
If $i \leq k$ and $j \leq \ell$, then
the {\em cell interval} of $A$ and $B$
is the set $[A,B]$ which consists of those cells $E$ of $\NN^{2}$
whose lower left corner $(r,s)$ satisfies 
$i\leq r \leq k$ and $j \leq s \leq \ell$.

% \[
% [A,B]=\{E \in \NN^2 \, : \, 
% \text{the lower left corner of $E$ is $(r,s)$ 
% with $i\leq r \leq k$, $j \leq s \leq l$}\}
% \]

Let $\Pc$ be a finite collection of cells of $\NN^{2}$.
We call $\Pc$ {\em row convex}
if the horizontal cell interval $[A,B]$ is contained in $\Pc$ for any cells $A$ and $B$ of $\Pc$ whose lower left corners are in horizontal position. 
Similarly one can define {\em column convex}.
We call $\Pc$ {\em convex} if it is row convex and column convex.  

An edge of $\Pc$ is a {\em free} edge if it is an edge of only one cell of $\Pc$. 
The {\em boundary} $B(\Pc)$ of $\Pc$
is the union of all free edges of $\Pc$.  
A cell $C$ of $\Pc$ is a {\em border} cell if
at least one of the edges of $C$ is a free edge.  
 
\medskip

{\bf (0.5)}
Each interval $[a,b]$ of $\NN^{2}$ can be regarded 
as a polyomino in the obvious way.  This polyomino is denoted by
$\Pc_{[a,b]}$.
Let $\Pc$ be a collection of cells of $\NN^{2}$ and $[a,b] \subset \NN^{2}$ 
an interval with $\Pc \subset \Pc_{[a,b]}$.  
Following \cite{Q}, we say that a polyomino $\Pc$ is {\em simple}
if, for any cell $C$ of $\NN^{2}$ not belonging to $\Pc$, 
there exists a path $C=C_1,C_2,\ldots,C_m=D$ with each $C_i\not \in \Pc$ 
such that $D$ is not a cell of $\Pc_{[a,b]}$.  Roughly speaking, 
a simple polyomino is a polyomino with no ``hole'' 
(see \cite[Figure $3$]{Q}).

\medskip

{\bf (0.6)}
Let $\Pc$ be a finite collection of cells of $\NN^{2}$ with $V(\Pc)$ 
its vertex set.  
Let $S$ denote the polynomial ring over a field $K$ whose variables 
are those $x_{a}$ with $a \in V(\Pc)$.
We say that 
an interval $[a,b]$ of $\NN^{2}$
is {\em an interval of $\Pc$} if $\Pc_{[a,b]} \subset \Pc$.
For each interval $[a, b]$ of $\Pc$, 
we introduce the binomial
\[
f_{a,b} = x_{a}x_{b} - x_{c}x_{d}, 
\]
where $c$ and $d$ are the anti-diagonals of $[a,b]$.  
Such a binomial $f_{a,b}$ is said to be an {\em inner $2$-minor} of $\Pc$.
Write $I_{\Pc}$ for the ideal generated by all inner $2$-minors of $\Pc$.
Especially, when $\Pc$ is a polyomino, we say that $I_{\Pc}$ is the 
{\em polyomino ideal} of $\Pc$.

\medskip

Now, one of the most exciting algebraic problems on polyominoes is when
a polyomino ideal is a prime ideal.
It is known (\cite{HM} and \cite{QSS}) that if a polyomino $\Pc$
is simple, then its polyomino ideal $I_{\Pc}$ is a prime ideal.
The polyomino ideals arising from simple polyominoes, however,
turn out to be well-known ideals \cite{OH} arising from
Koszul bipartite graphs.
Thus, form a view point of finding a new class of binomial prime ideals,
it is reasonable to study polyomino ideals of nonsimple polyominoes.
In the present paper, a class of nonsimple polyominoes $\Pc$ 
for which the polyomino ideal $I_{\Pc}$ is a prime ideal
(Theorem \ref{Boston}) and for which $I_{\Pc}$ cannot come from a finite simple graph 
(Theorem \ref{Sydney}) will be presented.

Finally the fact \cite{ES} that a binomial ideal is a prime ideal
if and only if it is a toric ideal (\cite[Chapter 5]{H}) 
explains the reason
why we are interested in polyomino ideals which are prime.

\section{Gr\"obner bases of polyomino ideals}
Let $\Pc$ be a finite collection of cells of $\NN^{2}$.
Let, as before, $S$ denote the polynomial ring 
over a field $K$ whose variables 
are those $x_{a}$ with $a \in V(\Pc)$.
We work with the lexicographical order on $S$ induced by the ordering 
of the variables $x_a$, $a \in V(\Pc)$, such that 
$x_a > x_b$ with $a=(i,j)$ and $b=(k,\ell)$, 
if $i>k$, or, $i=k$ and $j>\ell$.  

We refer the reader to \cite[Chapter $2$]{HH} and \cite[Chapter $1$]{H} 
for basic terminologies and results on Gr\"obner bases.

\begin{Lemma}[\cite{Q}]
\label{quadratic}
Let $\Pc$ be a collection of cells of $\NN^{2}$. 
Then the set of inner $2$-minors of $\Pc$ forms a reduced Gr\"obner basis 
of $I_{\Pc}$ with respect to $<_{\lex}$ if and only if,
for any two intervals $[a,b]$ and $[b,c]$ of $\Pc$, either $[e,c]$ or $[d,c]$ is an interval of $\Pc$, where $d$ and $e$ are the anti-diagonal corners of $[a,b]$.
\end{Lemma}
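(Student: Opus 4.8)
The plan is to apply Buchberger's criterion to the set $G$ of inner $2$-minors of $\Pc$, after first pinning down leading terms and observing that reducedness is automatic. Writing $a=(i,j)$ and $b=(k,\ell)$ with $a<b$, the anti-diagonal corners are $e=(i,\ell)$ and $d=(k,j)$, and $x_b=x_{(k,\ell)}$ is strictly the $<_{\lex}$-largest of the four variables occurring in $f_{a,b}$; hence $\ini_{<_{\lex}}(f_{a,b})=x_ax_b$, so every leading monomial is the product of the two diagonal corners of an interval of $\Pc$. Two such squarefree quadratic monomials divide one another only when the underlying intervals coincide, and the tail $x_ex_d=x_{(i,\ell)}x_{(k,j)}$ of any $f_{a,b}$ is divisible by no leading monomial, since $(i,\ell)$ and $(k,j)$ are incomparable and so are never the diagonal corners of an interval. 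Thus $G$ is monic, has pairwise non-dividing leading terms, and has reduced tails \emph{unconditionally}; consequently $G$ is a reduced Gr\"obner basis precisely when it is a Gr\"obner basis, and it remains only to characterize the latter.

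Next I would run Buchberger's criterion. Distinct elements of $G$ have coprime leading terms unless their intervals share a diagonal corner, and being squarefree of degree two they then share exactly one. I would organize the non-coprime overlaps into three types: \textbf{(A)} a common lower-left corner, \textbf{(B)} a common upper-right corner, and \textbf{(C)} the upper-right corner of one interval equal to the lower-left corner of the other, which is exactly the configuration of a pair $[a,b]$, $[b,c]$ in the statement (the two orientations of this join being symmetric). Coprime pairs are dispatched by the product criterion. For types (A) and (B) I expect the $S$-polynomial to reduce to zero unconditionally: a short sequence of reduction steps shows its surviving monomials factor through $2$-minors supported on \emph{sub-rectangles} of the two given intervals, and any sub-rectangle of an interval of $\Pc$ is again an interval of $\Pc$, so the required generators are always present. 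I would record this sub-rectangle stability as a one-line observation and carry out the (A) and (B) reductions explicitly, checking the comparable and incomparable positions of the two non-shared corners.

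The crux is type (C). A direct computation gives $S(f_{a,b},f_{b,c})=x_{a}x_{(k,n)}x_{(m,\ell)}-x_{c}x_{(i,\ell)}x_{(k,j)}$, where $c=(m,n)$, and I would show this reduces to zero exactly when $[e,c]$ or $[d,c]$ is an interval of $\Pc$. For the ``if'' part, suppose $[e,c]$ is an interval; its $2$-minor has leading term $x_{(i,\ell)}x_c$, which cancels the second monomial and leaves $x_{(m,\ell)}\bigl(x_{a}x_{(k,n)}-x_{(i,n)}x_{(k,j)}\bigr)$. The remaining binomial is the $2$-minor of the rectangle $[(i,j),(k,n)]$, and this is an interval of $\Pc$ because its cells are the union of those of $[a,b]$ with a sub-rectangle of $[e,c]$; hence the $S$-polynomial reduces to zero, and the case $[d,c]$ is symmetric. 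For the converse I would argue contrapositively: if neither $[e,c]$ nor $[d,c]$ is an interval, then the three diagonal pairs inside the $<_{\lex}$-leading monomial $x_{c}x_{(i,\ell)}x_{(k,j)}$ are either incomparable or equal to $[e,c]$ or $[d,c]$, so this monomial is standard. A polynomial whose leading monomial lies outside $\langle\ini_{<_{\lex}}(G)\rangle$ cannot reduce to zero, so $G$ fails Buchberger's criterion and is not a Gr\"obner basis.

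The main obstacle I anticipate is not any single computation but the bookkeeping: handling the overlap types (A)--(C) together with their positional sub-cases, and, within the decisive type (C), making the converse fully rigorous. The delicate point there is verifying that the surviving leading monomial is genuinely standard, which is exactly where the incomparability of the anti-diagonal corners and the simultaneous failure of $[e,c]$ and $[d,c]$ to be intervals are both needed. Once the sub-rectangle stability of intervals is in hand, everything else should be routine.
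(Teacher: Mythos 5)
The paper does not prove Lemma \ref{quadratic} itself but cites it from \cite{Q}, where the argument is exactly the Buchberger-criterion computation you outline: leading terms are the diagonal products, tails are products of incomparable anti-diagonal corners (so reducedness is automatic), $S$-pairs sharing a lower-left or upper-right corner reduce to zero via sub-rectangles, and the only obstruction comes from the configuration $[a,b]$, $[b,c]$, where the surviving leading monomial $x_cx_ex_d$ is standard precisely when neither $[e,c]$ nor $[d,c]$ is an interval of $\Pc$. Your case analysis and both directions of the equivalence check out, so this is correct and essentially the same approach as the cited source.
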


\begin{Corollary} \label{GB}
Let $\I \subset \NN^2$ be an interval of $\NN^{2}$
and $\Pc$ a convex polyomino which is a subpolyomino of $\Pc_{\I}$. 
Let $\Pc^c = \Pc_{\I} \setminus \Pc$. 
Then the set of inner $2$-minors of $\Pc^c$ forms a reduced Gr\"obner basis 
of $I_{\Pc^c}$ with respect to $<_{\lex}$.
\end{Corollary}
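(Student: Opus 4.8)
The plan is to deduce everything from Lemma \ref{quadratic}: since $\Pc^c=\Pc_\I\setminus\Pc$ is a finite collection of cells of $\NN^2$, it suffices to check, for any two intervals $[a,b]$ and $[b,c]$ of $\Pc^c$, that $[d,c]$ or $[e,c]$ is again an interval of $\Pc^c$, where $d,e$ are the anti-diagonal corners of $[a,b]$. Write $a=(i,j)$, $b=(k,\ell)$, $c=(m,n)$, so $i<k<m$ and $j<\ell<n$, and $d=(k,j)$, $e=(i,\ell)$. First I would record the four rectangles of cells cut out by the ``cross'' through $b$: the south-west block $R_1$ (columns $i,\dots,k-1$, rows $j,\dots,\ell-1$), the north-east block $R_2$ (columns $k,\dots,m-1$, rows $\ell,\dots,n-1$), the south-east block $R_3$ (columns $k,\dots,m-1$, rows $j,\dots,\ell-1$), and the north-west block $R_4$ (columns $i,\dots,k-1$, rows $\ell,\dots,n-1$). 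The hypothesis that $[a,b]$ and $[b,c]$ are intervals of $\Pc^c$ says exactly that $R_1$ and $R_2$ contain no cell of $\Pc$; and since $[d,c]=R_2\cup R_3$ and $[e,c]=R_2\cup R_4$ as cell sets, $[d,c]$ (resp. $[e,c]$) is an interval of $\Pc^c$ if and only if $R_3$ (resp. $R_4$) contains no cell of $\Pc$. Thus the statement reduces to the purely combinatorial claim: if $\Pc$ misses $R_1$ and $R_2$, then it misses $R_3$ or $R_4$.

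I would prove this by contradiction, assuming cells $P=(r_1,s_1)\in\Pc\cap R_3$ and $Q=(r_2,s_2)\in\Pc\cap R_4$, and using that $\Pc$, being a convex polyomino, is connected and row convex. Note $s_1\le\ell-1<\ell\le s_2$. Since consecutive cells of a path differ by one coordinate, connectivity forces every row $y$ with $s_1\le y\le s_2$ to be occupied by $\Pc$; row convexity makes the occupied columns in each such row an interval $[\lambda_y,\rho_y]$. Looking at rows $s_1$ and $s_2$: because $P$ lies in column $r_1\ge k$ while $R_1$ is empty, the interval $[\lambda_{s_1},\rho_{s_1}]$ cannot reach column $k-1$, so $\lambda_{s_1}\ge k$; symmetrically, because $Q$ lies in column $r_2\le k-1$ while $R_2$ is empty, $\rho_{s_2}\le k-1$. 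Hence as $y$ runs from $s_1$ up to $s_2$ the row-interval slides from the right of the vertical line $x=k$ to its left.

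The heart of the argument is to locate the transition. Let $y_1$ be the least $y\in[s_1,s_2]$ with $\lambda_y\le k-1$; then $s_1<y_1\le s_2$ and $\lambda_{y_1-1}\ge k$. Connectivity across the edge between rows $y_1-1$ and $y_1$ (a path from $P$ up to $Q$ must take a vertical step there) produces a column common to both rows, which, being $\ge\lambda_{y_1-1}\ge k$, forces $\rho_{y_1}\ge k$; together with $\lambda_{y_1}\le k-1$ this gives $\{k-1,k\}\subseteq[\lambda_{y_1},\rho_{y_1}]$, i.e. both $(k-1,y_1)$ and $(k,y_1)$ lie in $\Pc$. Since $j<y_1\le n-1$, the row $y_1$ falls either in $\{j,\dots,\ell-1\}$, whence $(k-1,y_1)\in R_1$, or in $\{\ell,\dots,n-1\}$, whence $(k,y_1)\in R_2$; either way $\Pc$ meets a region it was assumed to avoid, the desired contradiction.

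The step I expect to be the main obstacle is exactly this localization of the transition row. The naive hope --- that a path from $P$ to $Q$ must physically cross $R_1$ or $R_2$ --- fails, because $\Pc$ may bulge out of the window $[a,c]$ and connect $P$ to $Q$ through columns $<i$ or $>m-1$ or rows $<j$ or $>n-1$. Row convexity is what rules this out: it pins each row to a single interval, so the leftmost endpoints $\lambda_y$ vary controllably and the crossing of the line $x=k$ is forced to happen inside the window. (It is worth noting that column convexity is never used.) Beyond this, the only care required is the index bookkeeping that places $(k-1,y_1)$ or $(k,y_1)$ inside $R_1$ or $R_2$, which is the routine chase indicated above.
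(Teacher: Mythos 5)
Your proof is correct, and its overall strategy coincides with the paper's: reduce to the criterion of Lemma~\ref{quadratic}, suppose both $[d,c]$ and $[e,c]$ fail to be intervals of $\Pc^c$, extract cells of $\Pc$ lying in the two off-diagonal blocks $R_3$ and $R_4$, and use connectedness of $\Pc$ together with its convexity to force a cell of $\Pc$ into $\Pc_{[a,b]}$ or $\Pc_{[b,c]}$, contradicting $\Pc\cap\Pc^c=\emptyset$. Where you differ is in how the geometric step is executed. The paper argues, with reference to a figure, that the connecting path must contain a cell $C_j$ lying in the same row as $C$ but beyond $c$ (or in the same column as $C$ but below $a$), and then invokes row (resp.\ column) convexity to produce a horizontal (resp.\ vertical) cell interval $[C,C_j]$ contained in $\Pc$ and meeting $\Pc_{[a,b]}$; this uses both halves of convexity and leaves the existence of such a cell of the path essentially to the picture. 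You instead record, for each row met by the path, the interval $[\lambda_y,\rho_y]$ of occupied columns, establish $\lambda_{s_1}\ge k$ and $\rho_{s_2}\le k-1$ from the emptiness of $R_1$ and $R_2$, and locate the first row $y_1$ at which the interval crosses the line $x=k$; the vertical adjacency forced by connectedness between rows $y_1-1$ and $y_1$ then places both $(k-1,y_1)$ and $(k,y_1)$ in $\Pc$, one of which lands in $R_1$ or $R_2$. This is a tighter rendering of the same idea: it replaces the appeal to the figure by a discrete intermediate-value argument, and it makes visible that row convexity alone (rather than full convexity) suffices, which the paper's two-case argument does not.
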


\begin{proof}
% We use \cite[Theorem 4.1]{Q}, and 
Suppose that there exist intervals $[a,b]$ and $[b,c]$ of $\Pc^c$ 
such that neither $[e,c]$ nor $[d,c]$ is an interval of $\Pc^{c}$, 
where $d$ and $e$ are the anti-diagonal corners of $[a,b]$.
Then one can choose a cell $C$ of $\Pc_{[e,c]}$ and a cell $D$ 
of $\Pc_{[d,c]}$ such that $C$ and $D$ 
belong to $\Pc$.
% as shown in 
% Figure~\ref{GBfig1}. 
% Figure $1$.
% 
% The convexity of $\Pc$ implies that
%
Now, since $\Pc$ is a polyomino, it follows that 
there is a path of cells 
$C=C_1, C_2, \ldots, C_n=D$
of $\Pc$ connecting $C$ % $C = [a', a' + (1,1)]$ 
with $D$.
Then one of the situations drawn in 
% Figure~\ref{GBfig2}. 
Figure $1$ occurs.
Let $C = [a', a' + (1,1)]$.

\bigskip

\begin{figure}[htbp]
 \begin{minipage}{0.4\hsize}
  \begin{center}
   \includegraphics[width=55mm]{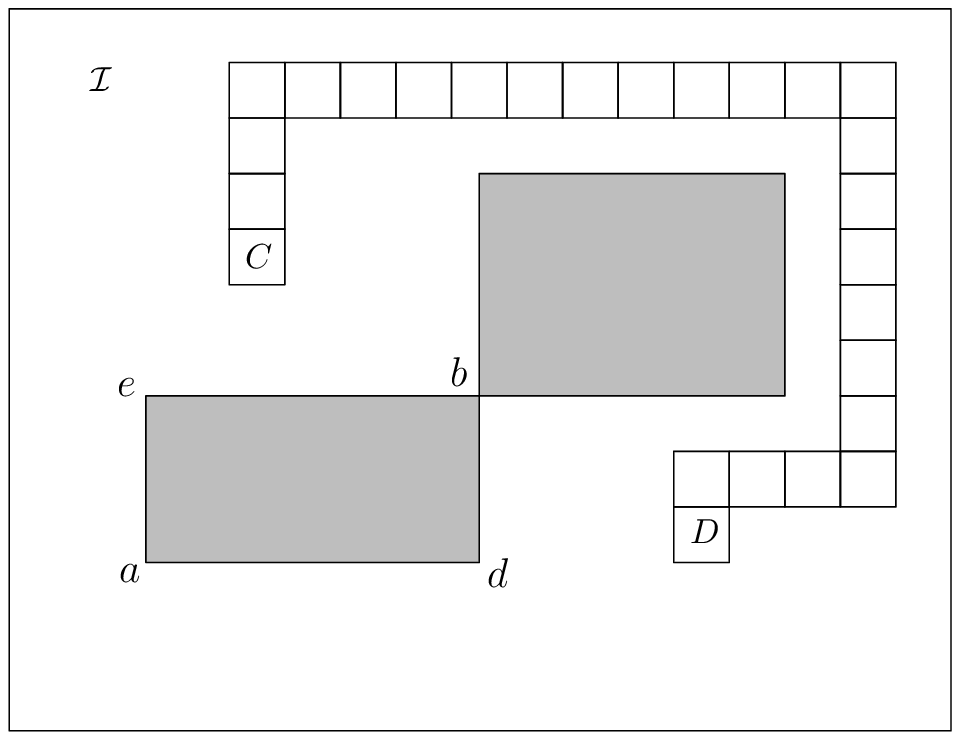}
(1)  
  \end{center}
  \label{GBfig1}
 \end{minipage}
 \begin{minipage}{0.4\hsize}
  \begin{center}
   \includegraphics[width=55mm]{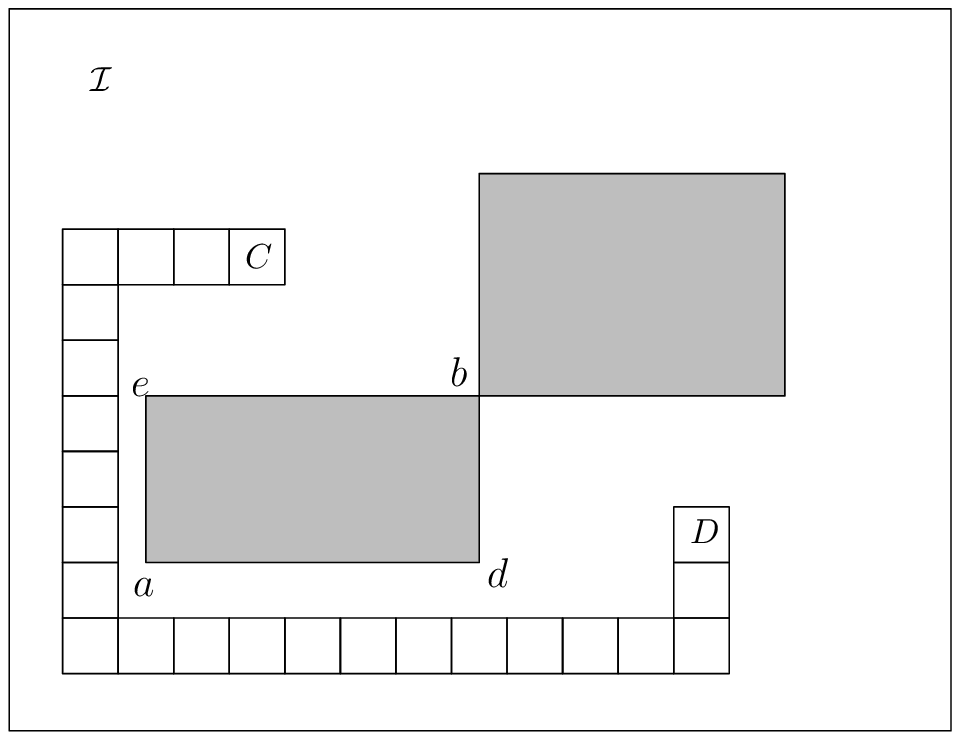}
(2)   
  \end{center}
  \label{GBfig2}
 \end{minipage}
\caption{~}
\end{figure}

\bigskip

\noindent
In other words, there is $1 < j < n$ for which 
$C_{j} = [c', c' + (1,1)]$ satisfies one of the followings:
\begin{enumerate}
\item[(i)]
if $a' = (\xi',\nu'), c' = (\xi'',\nu''), c = (\xi,\nu)$, then
$\nu' = \nu''$ and $\xi'' > \xi$;
\item[(ii)]
if $a' = (\xi',\nu'), c' = (\xi'',\nu''), a = (\xi_{0},\nu_{0})$, then
$\xi' = \xi''$ and $\nu'' < \nu_{0}$.
\end{enumerate}
% In the situation (1), there is $C_{j}$ for which
% $C$ and $C_{j}$ are in the horizontal position.
Since $\Pc$ is convex, it follows that, in (i) one has
$[C,C_{j}] \subset \Pc$, and in (ii) one has
$[C_{j},C] \subset \Pc$.
However, $[C,C_{j}] \cap \Pc_{[a,b]} \neq \emptyset$ in (i)
and $[C_{j},C] \cap \Pc_{[a,b]} \neq \emptyset$ in (ii),
each of which contradicts $\Pc \cap \Pc^{c} = \emptyset$.
\end{proof}

\section{Nonsimple polyominoes whose polyomino ideals are prime}
We now come to the main result of the present paper.

\begin{Theorem}
\label{Boston}
Let $\I \subset \NN^2$ be an interval of $\NN^{2}$
and $\Pc$ a convex polyomino which is a subpolyomino of $\Pc_{\I}$. 
Let $\Pc^c = \Pc_{\I} \setminus \Pc$ and suppose that
$\Pc^c$ is a polyomino. 
Then the polyomino ideal $I_{\Pc^c}$ is a prime ideal.
\end{Theorem}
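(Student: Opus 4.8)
The plan is to prove primeness by realizing the quotient $S/I_{\Pc^c}$ as a subalgebra of a Laurent polynomial ring, that is, by producing a monomial (toric) homomorphism $\phi\colon S\to T$ with $\ker\phi=I_{\Pc^c}$; this is the route suggested by the closing remark of the Introduction that a binomial ideal is prime exactly when it is toric, and it has the advantage that the kernel of any monomial map is automatically a prime ideal. The key simplification is Corollary \ref{GB}: the inner $2$-minors of $\Pc^c$ are a reduced Gr\"obner basis with respect to $<_{\lex}$, and since the initial term of each $f_{a,b}=x_ax_b-x_cx_d$ is the diagonal product $x_ax_b$, a $K$-basis of $S/I_{\Pc^c}$ is given by the standard monomials, namely the monomials whose support contains no pair of diagonal corners of an inner interval of $\Pc^c$. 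Consequently the whole theorem reduces to one assertion: if $\phi$ is a monomial map with $I_{\Pc^c}\subseteq\ker\phi$ that is injective on standard monomials, then the induced surjection $S/I_{\Pc^c}\to\operatorname{im}\phi$ carries a $K$-basis to distinct (hence linearly independent) monomials of $T$, so it is an isomorphism onto a domain and $\ker\phi=I_{\Pc^c}$.

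It remains to construct $\phi$ and to verify the two inclusions. Writing $\phi(x_{(i,j)})$ as the monomial with exponent vector $v(i,j)\in\ZZ^{N}$, the condition $\phi(f_{a,b})=0$ for every inner interval $[a,b]$, with corners $a=(i,j)$, $b=(k,\ell)$, $c=(i,\ell)$, $d=(k,j)$, is the discrete harmonicity relation $v(i,j)+v(k,\ell)=v(i,\ell)+v(k,j)$ on all inner rectangles. Over the whole of $\Pc_{\I}$ this forces the separable form $v(i,j)=r_i+c_j$, i.e.\ the Segre map $x_{(i,j)}\mapsto s_it_j$; but that map is too coarse, since its kernel also annihilates the straddling $2$-minors whose interval meets the removed region $\Pc$ and which are therefore not inner minors of $\Pc^c$. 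The remedy is that $\Pc^c$ need not be simply connected, so the harmonicity equations admit solutions with nontrivial monodromy around $\Pc$: one takes a separable labeling on the vertices lying to one side of a cut through $\Pc$ and adjoins a single extra variable recording the jump across the cut. Here convexity of $\Pc$ is exactly what lets the cut be chosen so that no inner interval of $\Pc^c$ crosses it; thus every inner minor still lies in $\ker\phi$, giving $I_{\Pc^c}\subseteq\ker\phi$, while each straddling minor now picks up a genuine discrepancy and is excluded.

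The hard part will be the injectivity of $\phi$ on standard monomials, which is the real combinatorial content of the theorem. Suppose $x^u$ and $x^{u'}$ are standard with $\phi(x^u)=\phi(x^{u'})$; I would use the explicit form of $v$ together with the connectedness of $\Pc^c$ to transform the support of $u$ into that of $u'$ by a sequence of inner-rectangle exchanges $e_a+e_b\leftrightarrow e_c+e_d$, which is impossible for two distinct standard monomials. Connectedness is used precisely at this point: it is what permits any such straightening to be routed through $\Pc^c$ around the hole without ever invoking a straddling rectangle, and the convexity of $\Pc$ is what pins down which rectangles are available. I expect the cleanest formalization to be a two-dimensional ladder-move induction on the number of inversions between the two supports, with the base case governed by the separable structure of $v$ away from the hole. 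Equivalently, one may phrase the same work as showing that $I_{\Pc^c}$ equals the lattice ideal $I_L$ of the lattice $L$ spanned by the inner-rectangle moves, for which one must check that $I_{\Pc^c}$ is saturated with respect to $\prod_{a}x_a$ and that $\ZZ^{V(\Pc^c)}/L$ is torsion-free; by the Eisenbud--Sturmfels criterion this once more yields that $I_{\Pc^c}$ is prime. Either way the geometric heart is the same, and it is the step I expect to be the main obstacle.
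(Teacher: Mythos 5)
Your overall strategy --- realize $S/I_{\Pc^c}$ as the image of a monomial map $\phi$ and deduce primeness from the fact that $\phi$ is injective on the standard monomials supplied by Corollary \ref{GB} --- is a legitimate route in principle, but as written it has two genuine gaps, and you yourself flag the second as unresolved. First, the construction of $\phi$ is not sound as described. You propose a separable labeling $v(i,j)=r_i+c_j$ modified by ``a single extra variable recording the jump across a cut through $\Pc$,'' and you assert that convexity of $\Pc$ lets the cut be chosen so that no inner interval of $\Pc^c$ crosses it. That assertion is false for the natural choices of cut: if $\Pc$ is, say, a single cell in the interior of $\Pc_{\I}$ and the cut is a vertical segment from the hole to the boundary, then a wide inner interval of $\Pc^c$ lying entirely above the hole straddles the cut, its four corners acquire the jump unequally, and the corresponding inner $2$-minor is \emph{not} killed by $\phi$; so even the easy inclusion $I_{\Pc^c}\subseteq\ker\phi$ fails for your map. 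A correct toric parametrization of such rings exists but requires assigning the extra variable on a quadrant-shaped region anchored at the hole (so that any interval with an unbalanced corner count must contain a cell of $\Pc$), and verifying this is real work that your sketch does not do. Second, the injectivity of $\phi$ on standard monomials --- which you correctly identify as ``the real combinatorial content'' --- is left entirely to a hoped-for ``ladder-move induction''; nothing in the proposal establishes it, so the kernel could a priori be strictly larger than $I_{\Pc^c}$ and the argument does not close.

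For comparison, the paper avoids all of this by a short localization argument: the corner $c$ of $\I=[a,b]$ is chosen so that, by Corollary \ref{GB}, $x_c$ divides no initial monomial of the reduced Gr\"obner basis, hence $x_c$ is a nonzerodivisor on $S/I_{\Pc^c}$ and $S/I_{\Pc^c}$ embeds into its localization at $x_c$. Inverting $x_c$ lets one eliminate every vertex $p_i$ that is an anti-diagonal corner (opposite $c$) of an interval $[r_i,q_i]$ of $\Pc^c$, via $x_{p_i}=x_{r_i}x_{q_i}x_c^{-1}$, and after this elimination $(I_{\Pc^c})_{x_c}$ is identified with $I_{\Pc'}$ for a \emph{simple} polyomino $\Pc'$, whose primality is already known from \cite{HM} and \cite{QSS}. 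That reduction sidesteps both of the obstacles your approach runs into; if you want to pursue the direct toric description, you will need to supply the correct region for the extra variable and a full proof of injectivity on standard monomials.
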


\begin{proof}
We may assume that $B(\Pc) \cap B(\Pc_{\I}) = \emptyset$;
otherwise, $\Pc$ is a simple polyomino (see Figure $2$)
and, as was stated, the result follows from \cite{HM} and \cite{QSS}.

\bigskip
\bigskip

\begin{figure}[htbp]
 \includegraphics[width = 4cm]{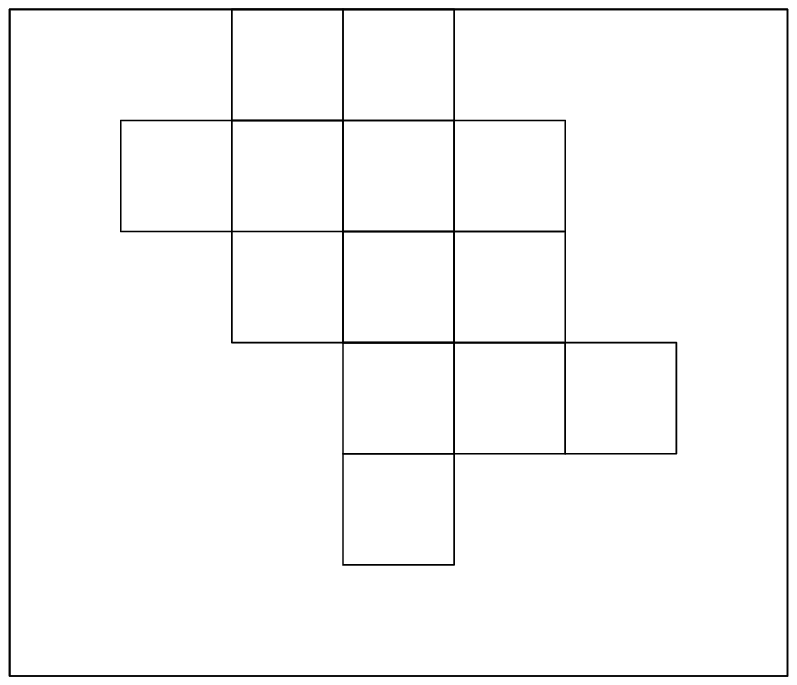}
\caption{~}\label{case1}
\end{figure}

\bigskip

Let $\I=[a,b]$ and $c$ and $d$ be the anti-diagonal corners of $[a,b]$,
where $b$ and $c$ are in horizontal position.  
% Furthermore, let $C$ be the cell 
% of $\Pc_{[a,b]}$ such that $c$ is a corner of $C$. 
It follows from Theorem~\ref{GB}
that $x_c$ cannot divide the initial monomial of any binomial
belonging to the reduced Gr\"obner basis of $I_{\Pc}$ 
with respect to $<_{\lex}$.
Hence $x_c$ is a nonzero divisor of 
$S/{\rm in}_{<_{\rm lex}}(I_{\Pc^{c}})$
and thus $x_c$ is a nonzero divisor of 
$S/I_{\Pc^{c}}$ as well. 
Hence the localization map $S/ I_{\Pc^c} \rightarrow (S/ I_{\Pc^c})_{x_c}$
is injective.  Here $(S/ I_{\Pc^c})_{x_c}$ is the localization 
of $(S/ I_{\Pc^c})_{x_c}$ at $x_c$. 
Thus, in order to prove that $S/ I_{\Pc^c}$ is an integral domain, 
it suffices to show that 
$(S/ I_{\Pc^c})_{x_c} = S_{x_c}/ ({I_{\Pc^c}})_{x_c}$ is an integral domain. 
For this, we will 
show that $({I_{\Pc^c}})_{x_c} = I_{\Pc'}$, 
where $\Pc'$ is a simple subpolyomino of $\Pc^c$, 
which guarantees that $({I_{\Pc^c}})_{x_c}$ is 
a prime ideal (\cite{HM} and \cite{QSS}).

Let $\Ac =\{p_1, \ldots, p_n\}$ denote the set of those
$p_i \in V(\Pc^c)$ for which there is an interval $[r_i, q_i]$ of $\Pc^c$ 
whose anti-diagonal corners are $c$ and $p_i$. See Figure~\ref{proof1}.

\bigskip

\begin{figure}[htbp]
\includegraphics[width = 6cm]{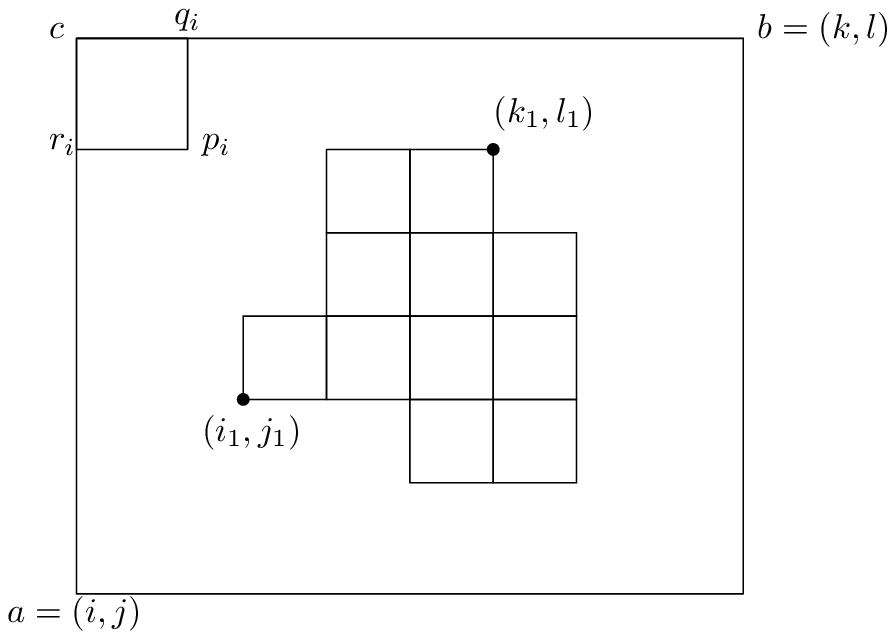}
\caption{~}\label{proof1}
\end{figure}

\bigskip

One has $r_i \in [a,c]$ and $q_i \in [c,b]$. 
Since $x_{r_i} x_{q_i} - x_c x_{p_i} \in I_{\Pc^c}$
and since the variable $x_c$ is invertible in $S_{x_c}$, 
one has 
$x_{p_i} =  x_{q_i} x_{r_i} x_c^{-1}$
in $S_{x_c} / (I_{\Pc^c})_{x_c}$
Thus, in $S_{x_c}/ ({I_{\Pc^c}})_{x_c}$, 
the variables $x_{p_i}$ with $p_i \in \Ac$ can be ignored. 

Let $p_i$ and $p_j$ belong to $\Ac$ 
for which $[p_i,p_j]$ is an interval in $\Pc^c$.  
It then follows that 
the anti-diagonals of $[p_i,p_j]$ are also contained in $\Ac$. 
Thus $f_{p_i,p_j} = x_{p_k} x_{p_\ell} - x_{p_j}x_{p_i}$,
where $p_k$ and $p_\ell$ are the anti-diagonal corners of $[p_i,p_j]$. 

Let $[v, p_i]$ be an interval of $\Pc^c$ with $p_i \in A$ 
and $v \notin A$, then by using the fact that 
$[r_i , p_i] \setminus \{r_i\} \subset A$, it follows that 
the anti-diagonal corner $p_{i'}$ of $[v, p_i]$
which is in horizontal position with $p_i$ belongs to $A$. 
Let $v'$ be the other anti-diagonal corner of $[v, p_i]$. 
Since $r_i = r_{i'}$, the inner $2$-minor  
$x_v x_{p_i} - x_{v'} x_{p_{i'}} \in I_{\Pc^c}$ 
can be written as $x_{r_i} (x_v x_{q_i} - x_{v'} x_{q_{i'}} )$ in  
$(I_{\Pc^c})_{x_c}$. 
Hence $x_v x_{p_i} - x_{v'} x_{p_{i'}}$ 
is a multiple of $x_v x_{q_i} - x_{v'} x_{q_{i'}}$ in $(I_{\Pc^c})_{x_c}$.
Similarly, if $[p_{i}, v]$ is an interval of $\Pc^c$ with $p_i \in A$ 
and $v \notin A$ and if $p_{i'} \in A$ and $v' \not\in A$ are
the anti-diagonal corner of $[p_i, v]$, then
$x_v x_{p_i} - x_{v'} x_{p_{i'}}$ 
is a multiple of $x_v x_{r_i} - x_{v'} x_{r_{i'}}$ 
in $(I_{\Pc^c})_{x_c}$.

Let $\Pc'$ be the collection of cells contained in $\Pc^c$ obtained by removing all the cells that appear in $\bigcup_{i=1}^n \Pc_{[r_i, q_i]}$. 
Let $a=(i,j)$ and $b=(k,\ell)$. Then $c=(i,\ell)$. 
We choose $(i_1, j_1) \in V(\Pc)$ such that, 
for any $(i_2, j_2) \in V(\Pc)$, one has either $i_1< i_2$ or ($i_1=i_2$ 
and $j_1 < j_2$). 
% In other words, $(i_1, j_1)$ is the most left and lower vertex of $\Pc$. 
Similarly, we choose $(k_1, \ell_1) \in V(\Pc)$ such that, 
for any $(k_2,\ell_2) \in V(\Pc)$, one has either 
$\ell_1 > \ell_2$ or ($\ell_1 = \ell_2$ and $k_1 > k_2$). 
% Again, in simple words, $(k_1,l_1)$ is the most upper and right vertex of $\Pc$.  
In $V(\Pc')$, we identify the vertical interval $[a, (i,j_1)]$ 
with $[(i_1,j), (i_{1},j_1)]$, and the horizontal interval
$[(k_1, \ell_1), (k,\ell_1)]$ with $[(k_1,\ell), b]$. 
Then, with this identification and by using the above discussion, 
one has $I_{\Pc'} = (I_{\Pc^c})_{x_c}$. 

Now, what we must prove is that $\Pc'$ is a simple polyomino.
First we claim that $\Pc'$ is a polyomino.  
Let $\mathcal{B}$ be the collection of border cells of $\Pc_{[a,b]}$ 
belonging to $\Pc'$.  Then $\mathcal{B}$ is connected.
Since every cell of $\Pc'$ is connected to at least one of the cells 
belonging to $\mathcal{B}$. Hence $\Pc'$ is connected.  Thus $\Pc'$
is a polyomino, as desired. 
Second, we claim that $\Pc'$ is simple.
Let $\mathcal{J}$ be an interval such that 
$\Pc' \subset \Pc_{[a,b]} \subset \Pc_{\mathcal{J}}$. 
If $\Pc'$ is not a simple polyomino, then one has a cell $D \notin \Pc'$ 
for which every path connecting $D$ 
with a cell not belonging to $\Pc_{\mathcal{J}}$ is interrupted 
by some cell of $\Pc'$. 
The inclusion $\Pc' \subset \Pc^c$ shows that 
$D$ must be a cell of the convex polyomino $\Pc$. 
Then all the cells of $\Pc^c$ whose edge sets intersect 
$B(\Pc)$ must be contained in $\Pc'$, 
which cannot be possible by our construction of $\Pc'$. 
Hence $\Pc'$ is simple, as required.
\end{proof}

\section{toric ideals of finite graphs}
As was stated in Introduction, 
one of the most exciting algebraic problems on polyominoes is when
a polyomino ideal is a prime ideal.
The fact (\cite{HM} and \cite{QSS}) that the polyomino ideals of 
simple polyominoes are prime seems to be of interest.  
However, it turns out that these binomial ideals belong to a subclass of 
binomial ideals arising from Koszul bipartite graphs (\cite{OH}).  
Thus, form a view point of finding a new class of binomial prime ideals,
the study of polyomino ideals of nonsimple polyominoes
is indispensable.  

In fact, the polyomino ideals of
Theorem \ref{Boston} {\em cannot} come from finite simple graphs.
(We say that a binomial ideal $I$ comes from a finite simple graph if
$I$ coincides with a toric ideal \cite{OHbinomial} arising from a finite simple graph.)  
More generally, we can show that

\begin{Theorem}
\label{Sydney}
Let $\I \subset \NN^2$ be an interval of $\NN^{2}$
and $\Pc$ a simple polyomino which is a subpolyomino of $\Pc_{\I}$. 
Let $\Pc^c = \Pc_{\I} \setminus \Pc$ and suppose that
$\Pc^c$ is a polyomino. Then its polyomino ideal
cannot come from a finite simple graph. 
\end{Theorem}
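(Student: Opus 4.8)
The plan is to argue by contradiction through the defining lattices. Suppose $I_{\Pc^c}=I_G$ for some finite simple graph $G$, where each variable $x_p$ ($p\in V(\Pc^c)$) is identified with an edge $\varepsilon(p)\in E(G)$ and $I_G=\ker\bigl(x_p\mapsto\prod_{v\in\varepsilon(p)}v\bigr)$. A toric ideal is the lattice ideal of a saturated lattice in $\ZZ^{V(\Pc^c)}$, and equality of two such ideals is equivalent to equality of the rational orthogonal complements of their lattices. Under the supposition $I_{\Pc^c}=I_G$ the ideal $I_{\Pc^c}$ is toric and the rational orthogonal complement of its lattice is the \emph{harmonic space}
\[
U=\{\,w\in\QQ^{V(\Pc^c)} \ :\ w_a+w_b=w_c+w_d \text{ for every inner } 2\text{-minor } f_{a,b}=x_ax_b-x_cx_d\,\},
\]
while for $I_G$ the same complement is spanned by the vertex--edge incidence vectors $\phi_v\in\{0,1\}^{V(\Pc^c)}$, $v\in V(G)$, where $\phi_v(p)=1$ iff $v\in\varepsilon(p)$. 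Hence $I_{\Pc^c}=I_G$ forces $\langle\phi_v : v\in V(G)\rangle_{\QQ}=U$.

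First I would pin down the local structure. Since $I_{\Pc^c}$ is generated by its inner $2$-minors, each cell minor $x_ax_b-x_cx_d$ is a quadratic binomial of $I_G$ with $\{a,b\}\cap\{c,d\}=\emptyset$; in a graph toric ideal such a binomial records a closed walk of length four, so the edges $\varepsilon(a),\varepsilon(b),\varepsilon(c),\varepsilon(d)$ form a $4$-cycle of $G$ with $\{\varepsilon(a),\varepsilon(b)\}$ and $\{\varepsilon(c),\varepsilon(d)\}$ its two pairs of opposite edges. Consequently, for every cell $C$ of $\Pc^c$ and every $v\in V(G)$, the star $\phi_v$ meets the four corners of $C$ in one of the patterns allowed at a vertex of a $4$-cycle: $v$ lies on none of the four edges, or on exactly two of them, which are necessarily \emph{adjacent} corners of $C$. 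In particular each $\phi_v$ is an \emph{additive} $0/1$ weight (it lies in $U$) whose support contains the complete vertex set of no cell of $\Pc^c$.

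The crux is the following claim, which I would isolate as a lemma: \emph{every additive $0/1$ weight $w\in U$ whose support contains no complete cell of $\Pc^c$ lies in the bipartite subspace}
\[
U_{\mathrm{bip}}=\{\,w\in\QQ^{V(\Pc^c)} \ :\ w_{(i,j)}=\alpha_i+\beta_j \text{ for some } \alpha_i,\beta_j\in\QQ\,\}\subseteq U .
\]
Equivalently, such a $w$ has vanishing monodromy around the hole $\Pc$. The idea is to read off the horizontal increments $u_{i,j}=w_{(i+1,j)}-w_{(i,j)}\in\{-1,0,1\}$ and the analogous vertical increments, which the additivity relation forces to be constant along each maximal run of cells of $\Pc^c$; a bipartite weight is exactly one whose increments are globally consistent. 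If $w\notin U_{\mathrm{bip}}$, the increments must \emph{jump} as one passes around $\Pc$, and I would show that for a $0/1$ weight any such jump forces the level set $\{w=1\}$ to turn a corner enclosing part of $\Pc$, producing a cell all four of whose vertices lie in the support --- a complete cell, contradicting the hypothesis. Establishing that winding around the hole is incompatible with the ``no complete cell'' condition is the main obstacle, and it is precisely the $0/1$ (rather than arbitrary integer) nature of the $\phi_v$ that makes it work.

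Granting the lemma, all $\phi_v$ lie in $U_{\mathrm{bip}}$, so $\langle\phi_v\rangle_{\QQ}\subseteq U_{\mathrm{bip}}$. On the other hand, because $\Pc^c$ is nonsimple, $\Pc$ is a genuine hole and the monodromy around it is nontrivial: if $C=[a,b]$ is a cell of $\Pc$ (hence \emph{not} a cell of $\Pc^c$) with anti-diagonal corners $c,d$, then the functional $m(w)=w_a+w_b-w_c-w_d$ pairs $w$ against the ``missing'' relation vector, which is not in the rational span of the genuine inner-minor relations; thus $m$ vanishes on $U_{\mathrm{bip}}$ but not on all of $U$, giving $U_{\mathrm{bip}}\subsetneq U$ (more invariantly one uses the monodromy along a loop of cells of $\Pc^c$ encircling $\Pc$). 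Therefore $\langle\phi_v\rangle_{\QQ}\subseteq U_{\mathrm{bip}}\subsetneq U$, the incidence vectors cannot span $U$, and $I_G\neq I_{\Pc^c}$, contradicting the supposition. Since this holds for every finite simple graph $G$, the ideal $I_{\Pc^c}$ comes from none of them. Throughout I would take $\Pc^c$ to be nonsimple (the case of interest, guaranteed when $\Pc$ lies in the interior of $\Pc_{\I}$, exactly as in the proof of Theorem \ref{Boston}), since a simple $\Pc^c$ does come from a bipartite graph.
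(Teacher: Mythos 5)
Your overall strategy is sound and genuinely different from the paper's, but it has a real gap at its crux. The reductions at the two ends are essentially fine: if $I_{\Pc^c}=I_G$ then the variable-to-edge map is injective, no two variables can map to the same edge (else a linear binomial would lie in $I_{\Pc^c}$), each cell minor does force a genuine $4$-cycle, so each incidence vector $\phi_v$ is a $0/1$ element of $U$ meeting every cell of $\Pc^c$ in either no corner or exactly one side; and $U_{\mathrm{bip}}\subsetneq U$ when $\Pc^c$ is nonsimple does follow, most cleanly from the facts that every interval of $\Pc^c$ is the telescoping sum of the cell relations of the cells it contains and that the cell relation vectors of $\Pc_{\I}$ are linearly independent, so the relation of a cell of $\Pc$ is not in the rational span of the relations of $\Pc^c$. (The identity $\langle\phi_v\rangle_{\QQ}=U$ also needs a sentence comparing ranks of the two lattices via the common prime $I_{L^{\mathrm{sat}}}=I_G$, but that is routine.)

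The gap is the lemma you yourself flag as ``the main obstacle'': that every $0/1$ vector of $U$ whose restriction to each cell of $\Pc^c$ is empty or a single side must lie in $U_{\mathrm{bip}}$. This is not a technicality --- it is the entire content of the theorem in your formulation, being exactly the point where the topology of the hole interacts with the $0/1$ constraint, and the sketch given (increments ``jump'', the level set ``turns a corner'' and produces a complete cell) is not a proof. What actually has to be excluded is a support that snakes around $\Pc$ as a closed band of matched sides; ruling this out requires tracking the interface between the support and its complement along a cycle of cells encircling $\Pc$ and showing its net monodromy vanishes, and the ``complete cell'' contradiction does not appear where the increment first jumps but only after propagating the side-matching condition some distance around the hole (one can check this by hand in the smallest case of a $3\times 3$ block of cells with the center removed: every attempt to realize a nonzero curl at the missing cell dies at a cell two steps away, not adjacent to the jump). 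By contrast, the paper avoids any such global lemma: it fixes sixteen explicit vertices $x_1,\dots,x_{16}$ surrounding the minimal interval $\J\supseteq\Pc$, propagates the edge assignments $\phi(x_i)=t_jt_k$ through a chain of eight inner minors circling the hole, and derives a contradiction from the single fact that $x_6x_{11}-x_7x_{10}$ (the minor of the interval $\J$, which is not an interval of $\Pc^c$) does not lie in $I_{\Pc^c}$. Until your lemma is proved, your argument does not close; if you do prove it, you will have a more conceptual and arguably more illuminating proof than the paper's, and you should also state explicitly (as the paper only does implicitly via its figure) that $\Pc$ is assumed interior to $\Pc_{\I}$, since otherwise $\Pc^c$ can be simple and the conclusion fails.
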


\begin{proof}
Let $\mathcal{J}$ be the smallest interval in $\NN^2$ such that $\Pc \subset \J$. 
We choose $x_1, \ldots, x_{16}$ belonging to $V(\Pc^c)$, as shown in Figure~\ref{1}, where $\Pc$ is shown by grey region and where $\mathcal{J} = [x_{10}, x_{7}]$.

\begin{figure}[htbp]
\includegraphics[width = 8cm]{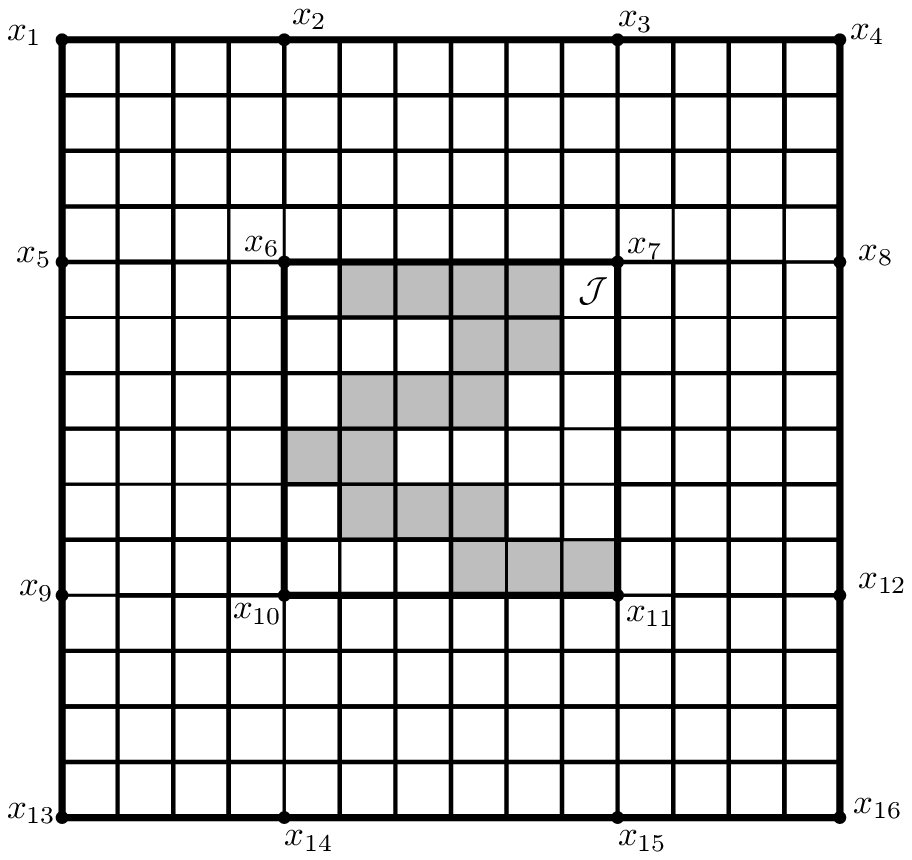}
\caption{Polyomino $\Pc^c$}\label{1}
\end{figure}

Assume that there exists a finite simple graph $G$ with vertex set $V(G)$ and edge set $E(G)$ such that the toric ideal $I_G$ arising from $G$ is equal to $I_{\Pc}$. 
Let $K[G] = K[t_i t_j | \{i,j\} \in E(G)]$ be the edge ring of $G$. Then there exists an isomorphism $\phi: K[\Pc] \rightarrow   K[G]$ such that for each $ x_a \in K[\Pc]$ there exists a unique edge $\{i,j\} \in E(G)$ with $\phi(x_a) = t_i t_j$.

The 2-minor $x_2 x_7 - x_3 x_6$ is an inner minor of $\Pc^c$ and hence $\phi (x_2 x_7) = \phi(x_3 x_6)$. Let $\phi(x_2) = t_i t_j$. Then $\phi (x_7)= t_k t_l $ where $i,j,k,l$ are pairwaise distinct vertices of $G$ and $\{i,j\}, \{k,l\} \in E(G)$.  Then $\phi(x_3 x_6) = t_i t_j t_k t_l$ which shows that we have one of the following possibilities:

\begin{enumerate}
\item[(i)] $\phi(x_3) = t_i t_k$ and $\phi(x_6) = t_j t_l$;
\item[(ii)] $\phi(x_3) = t_i t_l$ and $\phi(x_6) = t_j t_k$;
\item[(iii)] $\phi(x_3) = t_j t_k$ and $\phi(x_6) = t_i t_l$;
\item[(iv)] $\phi(x_3) = t_j t_l$ and $\phi(x_6) = t_i t_k$.
\end{enumerate}

We may assume that $\phi(x_3) = t_i t_k$ and $\phi(x_6) = t_j t_l$. The discussion for other cases is similar. By using the inclusion $x_1 x_6 - x_2 x_5 \in I_{\Pc^c}$ and that $\phi(x_2) = t_i t_j$ and $\phi(x_6) = t_j t_l$, we see that $\phi(x_1) = t_i t_p$ and $\phi (x_5) = t_l t_p$where $\{i,p\} , \{l,p\} \in E(G)$ for some $p \in V(G) \setminus \{i,j,k,l\}$. Note that $p \neq k$ because otherwise $\phi(x_5)= \phi(x_7) = t_k t_l$, which is not possible. Now  from $x_5 x_{10} - x_6 x_9  \in I_{\Pc^c} $ and $\phi(x_5) = t_p t_l$, $\phi(x_6) = t_j t_l$, we obtain $\phi(x_{10}) = t_j t_q$ and $\phi(x_9) = t_p t_q$ for some $q \in V(\Pc^c) \setminus \{i,p,l,j\}$. Continuing in the same way, from $x_9 x_{14} - x_{10}x_{13} \in I_{\Pc^c}$
and $\phi(x_9) = t_p t_q$ and  $\phi(x_{10}) = t_j t_q$, we get $\phi(x_{14}) = t_r t_j$ and $\phi(x_{13}) = t_r t_p$ for some $r \in V(\Pc^c) \setminus \{i,j,l,p,q\}$. 
Then, by using $x_{10} x_{15} - x_{11}x_{14} \in I_{\Pc^c}$, $\phi(x_{10}) = t_j t_q$ and $\phi(x_{14}) = t_r t_j$, we get  $\phi(x_{15}) = t_s t_r$ and  $\phi(x_{11}) = t_s t_q$ for some $s \in V(\Pc^c) \setminus \{j,p,q,r\}$. 

Furthermore, by using $x_3 x_8 - x_4 x_7 \in I_{\Pc^c}$,
$\phi(x_{3}) = t_i t_k$ and $\phi(x_{7}) = t_k t_l$,
we obtain  $\phi(x_4) = t_i t_y$ and $\phi (x_8) = t_l t_y$ for some $y \in V(G) \setminus \{i,k,l,j,p\}$. 
Similarly, from $x_{7} x_{12} - x_{11}x_{8} \in I_{\Pc^c}$, $\phi(x_{7}) = t_k t_l$, $\phi(x_{8}) = t_y t_l$ and $\phi(x_{11}) = t_s t_q$, it follows that $t_k | t_s t_q$. 
Thus one has either $k=s$ and $\phi(x_{12}) = t_q t_y $ or $k=q$ and $\phi(x_{12}) = t_s t_y $.

Let $k=s$. Then $\phi(x_6 x_{11}- x_7 x_{10})= (t_j t_l) (t_k t_q) - (t_k t_l) (t_j t_q) =0$, 
which guarantees $x_6 x_{11}- x_7 x_{10} \in I_G$.
However, one has $x_6 x_{11}- x_7 x_{10} \notin I_{\Pc^c}$, because it is not an inner minor of $\Pc^c$, and it gives us a contradiction to our assumption $I_G = I_{\Pc^c}$. 
Hence $k=q$ and $\phi(x_{12})= t_s t_y$. But then $x_{11} x_{16}- x_{12} x_{15} \in I_{\Pc^c} = I_G$, $\phi(x_{12} x_{15}) = (t_s t_y )(t_s t_r)$ and $\phi(x_{11})= t_s t_k$. 
Thus one has either $k=r$ or $k=s$, which is not possible;
otherwise either $\phi(x_{11})= t_s t_r = \phi(x_{15})$ or $\phi(x_{11})= t_s ^2$. 
As a result, we conclude that $I_G \neq I_{\Pc^c}$ for any finite simple graph $G$.
\end{proof}

Finally, it may be conjectured that the polyomino ideal $I_{\Pc}$ of a polyomino $\Pc$
comes from a finite simple graph if and only if $\Pc$ is nonsimple.
Furthermore, Theorem \ref{Boston} might be true when $\Pc$ is simple.  

\bigskip


\begin{thebibliography}{10}

\bibitem{ES}
D.~Eisenbud and B.~Sturmfels,
Binomial ideals,
{\em Duke Math. J.} {\bf 84} (1996), 1--46.

\bibitem{HH}
J.~Herzog and T.~Hibi, ``Monomial Ideals,'' GTM 260, Springer, 2011.

\bibitem{HQS} J.~Herzog, A.~A.~Qureshi and A.~Shikama, 
Gr\"obner basis of balanced polyominoes, 
{\em Math. Nachr.}, to appear.

\bibitem{HM} J.~Herzog and S.~S.~Madani, 
The coordinate ring of a simple polyomino, arxiv:1408.4275v1.

\bibitem{H}
T.~Hibi, Ed., ``Gr\"obner Bases: Statistics and Software Systems,''
Springer, 2013.

\bibitem{OHbinomial}
H.~Ohsugi and T.~Hibi,
Toric ideals generated by quadratic binomials,
{\em J. Algebra} {\bf 218} (1999), 509--527.

\bibitem{OH}
H.~Ohsugi and T.~Hibi,
Koszul bipartite graphs,
{\em Adv. Appl. Math.} {\bf 22} (1999), 25--28.

\bibitem{QSS} A.~A.~Qureshi, H.~Shibuta and A.~Shikama, 
Simple polyominoes are prime, arXiv:1501.05107v2.

\bibitem{Q} A.~A.~Qureshi, 
Ideals generated by $2$-minors, collections of cells and stack polyominoes,  
{\em J. Algebra} {\bf 357} (2012), 279--303.

\end{thebibliography}
\end{document}